\documentclass[12pt,draftclsnofoot,onecolumn]{IEEEtran}

\ifCLASSINFOpdf
  \else  
\fi
\usepackage{amsmath}
\usepackage{mathrsfs}
\usepackage{amsthm}
\usepackage{multirow}
\usepackage{cite}    
\usepackage{color}
\usepackage{graphicx}      

\newtheorem{remark}{Remark}
\newtheorem{lemma}{Lemma}

\newtheorem{theorem}{Theorem}

\newcommand{\be}{\begin{eqnarray}}
\newcommand{\ee}{\end{eqnarray}}
\newcommand{\bi}{\begin{itemize}}
\newcommand{\ei}{\end{itemize}}
\newcommand{\ba}{\begin{array}}
\newcommand{\ea}{\end{array}}
\newcommand{\bm}{\begin{matrix}}
\newcommand{\eem}{\end{matrix}}

\newcommand{\no}{\nonumber}

\hyphenation{op-tical net-works semi-conduc-tor}

\begin{document}
\title{Optimization Methods Rooting in Optimal Control }

\author{Huanshui~Zhang,~Hongxia~Wang
\thanks{This work was supported by the Original Exploratory Program Project of National Natural Science Foundation of China (62250056), the Joint Funds of the National Natural Science Foundation of China (U23A20325), the Major Basic Research of
Natural Science Foundation of Shandong Province (ZR2021ZD14), and the High-level Talent Team Project of Qingdao West
Coast New Area (RCTD-JC-2019-05).
}
}
\maketitle
\begin{abstract}
In the paper, we propose solving optimization problems (OPs) and understanding the Newton method from the optimal control view. We propose a new optimization algorithm based on the optimal control problem (OCP). The algorithm features converging more rapidly than gradient descent, meanwhile, it is superior to Newton's method because it is not divergent in general and can be applied in the case of a singular Hessian matrix. These merits are supported by the convergence analysis for the algorithm in the paper. We also point out that the convergence rate of the proposed algorithm is inversely proportional to the magnitude of the control weight matrix and proportional to the control terminal time inherited from OCP.  

\end{abstract}
%

%
\section{Introduction}
 
The research history of optimization problems is long and rich, spanning several centuries and evolving with the development of mathematics, engineering, operations research, and computer science. This originates from that optimization problems are widespread in the modeling of real-world systems for a very broad range of applications. Such applications include economies of scale, fixed charges, finance, allocation and location problems, structural optimization, engineering design, network and transportation problems, chip design and database problems, mechanical design, chemical engineering design and control, molecular biology, and several other combinatorial optimization problems such as integer programming and related graph problems. With the development and implementation of practical optimization algorithms, more and more scientists in diverse disciplines have been using optimization techniques to solve problems \cite{rao2019engineering, belegundu2019optimization,banichuk2013introduction,chong2023introduction}. 

Predominant optimization methods, such as the gradient descent \cite{faires2003numerical} and Newton's methods \cite{boyd2004convex}, are proposed based on the function approximation. Despite being powerful, favorable,  and widely used optimization algorithms, they also have limitations and potential drawbacks.
The learning rate is a crucial hyperparameter in gradient descent \cite{baydin2017online}. Gradient descent's performance is susceptible to the learning rate (If the learning rate is too small, the algorithm may converge slowly, while if it is too large, the algorithm may overshoot the minimum and fail to converge \cite{fei2020new}) while the hyperparameters tuning is challenging \cite{ruder2016overview}. Newton's method may diverge if the Hessian matrix is singular or the algorithm encounters numerical instability (when the Hessian matrix is ill-conditioned). Both gradient descent and Newton's methods are sensitive to the choice of the initial value. Suppose the initial guess is far from the actual root or minimum. In this case, Newton's method might converge to a different solution or fail to converge altogether and gradient descent may get stuck in local minima or saddle points. To mitigate some of the aforementioned issues, a body of variations and enhancements, such as stochastic gradient descent \cite{bottou2010large}, momentum \cite{qian1999momentum}, adaptive learning rate methods \cite{duchi2011adaptive,o2015adaptive,zeiler2012adadelta}, quasi-Newton method \cite{broyden1967quasi}, inexact Newton method \cite{dembo1982inexact}, and modified Newton method \cite{fletcher1977modified} have been developed.  However, it still seems somewhat blind to tune parameters in diverse methods.

The paper is the first to propose solving OPs and understanding the existing optimization methods from the optimal control view. Every parameter or variable in OCP has a clear physical meaning whereby it is easy to tune the parameters to alter the interested variables \cite{lewis2012optimal}. Accordingly, the convergence rate of algorithms based on OCP can be altered by adjusting those parameters inherited from OCP. It should be emphasized that in this paper, we do not apply the results of optimal control mechanically, but we propose a new optimization algorithm based on the optimal control and make a convergence analysis. The algorithm features converging more rapidly than gradient descent, meanwhile, it is superior to Newton's method because it can be applied in the case of a singular Hessian matrix. We also point out that the convergence rate of the proposed algorithm can be accelerated by decreasing the magnitude of the control weight matrix or increasing the control terminal time inherited from OCP.  

The remainder is arranged as follows. In Section II, we propose an update relation, an optimization method, for OP by the solution to OCP.  We explore the relationship between the existing (gradient descent and Newton's method) and proposed methods in Section III.  Section IV is devoted to the convergence analysis of explicitly implementing the proposed method. Some conclusions are achieved in Section V.


\section{Resolve optimization problem by using optimal control method}
Assume that $f(x)$ is twice continuously
differentiable. In most practical settings, one would almost surely need to resort to numerical
techniques to address the optimization problem
\begin{align} \min_x f(x). \label{obj}\end{align}
The traditional numerical method, for instance, gradient descent or Newton's method,  directly finds zeros of $f'(x)$ by an update recursion
\begin{align}
x_{k+1}=\Phi(x_k)
\end{align}
with $\Phi(\cdot)$ being a designed function. 

Instead, we will propose an updated relation with the aid of the following OCP:
\begin{align}
&\min_{u} \sum_{k=0}^N[f(x_k)+u_k'Ru_k]+f(x_{N+1}),\label{perf}\\
&\mbox{subject to~} x_{k+1}=x_k+u_k, \label{sys1}
\end{align}
where $x_k$ and $u_k$ are the state and control of system \eqref{sys1}, respectively, integer $N>0$ is the control terminal time, positive definite matrix $R$ is the control weight matrix,  and \eqref{perf} is the cost functional of OCP, closely related to OP \eqref{obj}.   

\begin{remark}
It can be seen from \eqref{perf}  that the optimization algorithm based on OCP will be fast because  $u_k$ in \eqref{sys1} is chosen to minimize $f(x_{k+1})$.   
\end{remark}

\begin{remark} According
to OCP \eqref{perf}-\eqref{sys1}, the smaller the weight matrix $R$ is, the more control energy is allowed to be
consumed for minimization, i.e., the larger the update step size $u_k$ in \eqref{sys1} is, the more rapidly the
iteration scheme \eqref{sys1} may converge to the extremal point of OP \eqref{obj}.
\end{remark}

\begin{remark}
OCP \eqref{perf}-\eqref{sys1} degenerates into $\min_{u_0} f(x_1)$ subject
to \eqref{sys1} without restriction of consumed control energy. 
\end{remark}

\begin{lemma}
The optimal control strategy of OCP \eqref{perf}-\eqref{sys1} admits
\begin{align}
u_k=-R^{-1}\sum_{i=k+1}^{N+1}f'(x_i). \label{conlaw}
\end{align}
\end{lemma}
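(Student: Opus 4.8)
The plan is to treat \eqref{perf}-\eqref{sys1} as a finite-horizon discrete-time optimal control problem and to derive \eqref{conlaw} from the first-order stationarity conditions via the adjoint (costate) method, equivalently the discrete-time minimum principle. The dynamic programming route is available but less convenient here, since the nonquadratic running cost $f(x_k)$ prevents the value function from being quadratic; the costate method instead yields the explicit summation in \eqref{conlaw} directly. First I would adjoin the dynamics \eqref{sys1} to the cost \eqref{perf} with a sequence of multipliers $\la_{k+1}$, forming the Lagrangian
\begin{align}
L=\sum_{k=0}^N\big[f(x_k)+u_k'Ru_k\big]+f(x_{N+1})+\sum_{k=0}^N\la_{k+1}'\big(x_k+u_k-x_{k+1}\big),\nonumber
\end{align}
and then set the partial derivatives with respect to the free variables $x_1,\dots,x_{N+1}$ and $u_0,\dots,u_N$ to zero, with $x_0$ held fixed as the given initial condition.

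Next I would extract the optimality conditions. Differentiating $L$ with respect to each interior state $x_j$ for $1\le j\le N$ produces the backward costate recursion $\la_j=f'(x_j)+\la_{j+1}$, obtained by collecting the $f'(x_j)$ term from the running cost together with the two constraint contributions $+\la_{j+1}$ and $-\la_j$ coming from consecutive index shifts in the dynamics. Differentiating with respect to the terminal state $x_{N+1}$ isolates the transversality condition $\la_{N+1}=f'(x_{N+1})$, since at the endpoint only the terminal cost and the single constraint term $-\la_{N+1}'x_{N+1}$ are present. The key step is then to solve this recursion in closed form: telescoping from the terminal condition gives
\begin{align}
\la_{k+1}=\sum_{i=k+1}^{N+1}f'(x_i).\nonumber
\end{align}

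Finally, the stationarity condition in the control, $\partial L/\partial u_k=0$, relates $u_k$ linearly to $\la_{k+1}$ through the invertible weight $R$; substituting the telescoped expression for $\la_{k+1}$ yields exactly the claimed strategy \eqref{conlaw}.

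I expect the principal obstacles to be organizational rather than deep. The first is careful bookkeeping of the index shifts in the costate recursion and the clean separation of the boundary behavior at $k=N+1$ (where the running cost has already stopped) from the interior equations, so that the telescoping sum terminates precisely at $f'(x_{N+1})$. The second is the normalization constant emerging from differentiating the quadratic penalty $u_k'Ru_k$, which must be reconciled with the conventional scaling of $R$ to match the coefficient in \eqref{conlaw}. I would also emphasize that \eqref{conlaw} is an \emph{implicit} characterization of the optimal trajectory, because its right-hand side depends on the future states $x_i$ generated by \eqref{sys1} under the very control being determined; the preceding argument establishes it as a necessary condition, and sufficiency (global optimality) would further require convexity of $f$.
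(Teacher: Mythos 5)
Your costate/Lagrangian derivation is precisely the discrete maximum principle argument that the paper invokes and omits, so you have taken the same route, merely spelled out in full with the correct backward recursion $\lambda_j=f'(x_j)+\lambda_{j+1}$, transversality $\lambda_{N+1}=f'(x_{N+1})$, and telescoped sum. Your flagged ``normalization'' obstacle is real and worth stating plainly: with the cost written as $u_k'Ru_k$, stationarity gives $2Ru_k+\lambda_{k+1}=0$, so \eqref{conlaw} holds verbatim only under the convention $\tfrac{1}{2}u_k'Ru_k$ or after absorbing the factor $2$ into the arbitrary positive definite $R$ --- evidently the paper's intent, and immaterial to all subsequent results.
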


\begin{proof}
The conclusion is direct by using the maximum principle. We thus omit its proof.
\end{proof}

Take \eqref{conlaw} as an update direction. Then it can be obtained from \eqref{sys1} an update relation
\begin{align}
x_{k+1}=x_k-R^{-1}\sum_{i=k+1}^{N+1}f'(x_i). \label{impite}
\end{align}
Although the relation \eqref{impite} is implicit, some explicit approximation implementations are possible.  We will give an explicit approximation recursion later.

It should be stressed that the proposal of a new update scheme is not our ultimate goal. We are also interested in revealing the relationship between existing methods and our methods.

\section{The proposed algorithm and convergence analysis}
To explore the relationship between the existing method and our method, we reformulate the update relation \eqref{impite} now.
Make the first-order Taylor expansion of $f'(x_i)$ at point $x_{i-1}$ in \eqref{impite}. It can be derived that 
\begin{align}
&x_{k+1}=x_{k}-g_{k}, k=0,\cdots, N,\label{rimpite}\\
&g_{k}=\frac{1}{R+f^{(2)}_{k}}(f'_{k}+\sum_{i=k+1}^{N}(f'_{i}-f^{(2)}_{i}g_{i})), g_N=\frac{f'_N}{R+f^{(2)}_N},\label{gk}
\end{align}
where $f'_k \dot =f'(x_k)$ and $f^{(2)}_{k}\dot=f^{(2)}(x_k)$.

To facilitate calculation, we replace all $x_i, i \ge k+1$, with $x_k$ in the right-hand side of \eqref{gk} and  obtain the following recursion
\begin{align}
&x_{k+1}=x_{k}-\bar g_{k}(x_k), k=0,\dots,N,\label{rrimpite}\\
&\bar g_{l}=\frac{1}{R+f^{(2)}_{k}}(f'(x)+R\bar g_{l+1}(x)), \bar g_N=\frac{f'(x)}{R+f^{(2)}(x)}, l=N-1,N-2,\dots, k.\label{bargk}
\end{align}


According to \eqref{rrimpite}-\eqref{bargk}, the following algorithm is proposed.

{\bf Algorithm 1}: \\
1). Fix integer $N>0$ and prescribe any $x_0$;\\
2). Carry out the update relation  \eqref{rrimpite}-\eqref{bargk} and obtain $x_{N+1}$;

\begin{remark}
In the case of $R=0$, the recursion \eqref{rrimpite}-\eqref{bargk} recovers the famous Newton method.
\end{remark}

In the sequel, we will make some convergence analysis pertinent to Algorithm 1. To compare with Newton's method,  we assume that $f(x)$ is cubic differentiable.

\begin{theorem}\label{thm1}
Assume $f'(x_*)=0$ and $f^{(2)}(x_*)> 0$. Consider the iteration scheme 
\begin{align}
x_{k+1}=&x_{k}-\bar g_{0}(x_k), \label{expiten}\\
\bar g_{l}(x)=&\frac{1}{R+f^{(2)}(x)}[f'(x)+R\bar g_{l+1}(x)], \bar g_N=\frac{f'(x)}{R+f^{(2)}(x)},l=N-1,\dots,0. \label{rrbargk}
\end{align}
 Then \eqref{expiten}-\eqref{rrbargk} is linearly convergent to $x_*$ and has convergence constant $(\frac{R}{R+f^{(2)}(x_*)})^{N+1}$.
\end{theorem}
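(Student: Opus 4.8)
The plan is to collapse the backward recursion \eqref{rrbargk} into a closed form for $\bar g_0$, recognize the resulting update as a damped Newton step, and then analyze the fixed-point map $\Phi(x)=x-\bar g_0(x)$ through its derivative at $x_*$.

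First I would unwind \eqref{rrbargk}. Writing $\rho(x)=\frac{R}{R+f^{(2)}(x)}$ and noting $\bar g_N=\frac{f'(x)}{R+f^{(2)}(x)}$, the recursion is the affine first-order relation $\bar g_l=\frac{f'(x)}{R+f^{(2)}(x)}+\rho(x)\,\bar g_{l+1}$ in the index $l$. Iterating from $l=N$ down to $l=0$ produces a geometric sum,
\[
\bar g_0(x)=\frac{f'(x)}{R+f^{(2)}(x)}\sum_{j=0}^{N}\rho(x)^{j}=\frac{f'(x)}{f^{(2)}(x)}\left(1-\rho(x)^{N+1}\right),
\]
where I have used $\sum_{j=0}^{N}\rho^{j}=\frac{1-\rho^{N+1}}{1-\rho}$ together with the identity $\frac{1}{(R+f^{(2)})(1-\rho)}=\frac{1}{f^{(2)}}$. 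Thus \eqref{expiten} is exactly the Newton step $f'/f^{(2)}$ scaled by the factor $1-\rho^{N+1}\in(0,1)$, which also makes transparent the earlier observation that $R=0$ (hence $\rho=0$) recovers Newton's method.

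Next I would treat $\Phi(x)=x-\bar g_0(x)$ as a fixed-point iteration. Since $f'(x_*)=0$ we get $\bar g_0(x_*)=0$, so $\Phi(x_*)=x_*$; moreover $f^{(2)}(x_*)>0$ and $R>0$ give $R+f^{(2)}(x)>0$ in a neighborhood of $x_*$, so $\bar g_0$ and $\Phi$ are $C^1$ there (using that $f$ is cubic differentiable). The convergence constant is then $\Phi'(x_*)$. Differentiating $\bar g_0=\frac{f'}{f^{(2)}}\left(1-\rho^{N+1}\right)$ at $x_*$, the term acting on $1-\rho^{N+1}$ carries a factor $f'(x_*)=0$ and drops out, while $\frac{d}{dx}\frac{f'}{f^{(2)}}=1-\frac{f'f^{(3)}}{(f^{(2)})^{2}}$ equals $1$ at $x_*$. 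Hence $\bar g_0'(x_*)=1-\rho(x_*)^{N+1}$ and
\[
\Phi'(x_*)=1-\bar g_0'(x_*)=\left(\frac{R}{R+f^{(2)}(x_*)}\right)^{N+1}.
\]

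Finally I would invoke the standard local-convergence theorem for smooth fixed-point iterations: because $0<\Phi'(x_*)<1$ and $\Phi'$ is continuous at $x_*$, for $x_0$ sufficiently close to $x_*$ the iterates converge to $x_*$, and the expansion $x_{k+1}-x_*=\Phi'(x_*)(x_k-x_*)+o(|x_k-x_*|)$ gives $\lim_{k\to\infty}\frac{|x_{k+1}-x_*|}{|x_k-x_*|}=\Phi'(x_*)$, i.e. linear convergence with precisely the claimed constant. I expect the only genuinely delicate step to be the algebraic collapse of \eqref{rrbargk} into the closed form; once the geometric-series identity is secured, the remainder is a routine fixed-point derivative computation. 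The argument is written here for scalar $x$, as the notation $\frac{1}{R+f^{(2)}}$ suggests; for vector $x$ it carries over provided $R$ and $f^{(2)}(x_*)$ commute (e.g. are simultaneously diagonalizable), so that the geometric sum and the scalar constant $\left(\frac{R}{R+f^{(2)}(x_*)}\right)^{N+1}$ retain their meaning.
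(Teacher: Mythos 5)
Your proof is correct, and it takes a genuinely different route from the paper's. The paper never collapses the recursion: it keeps $\bar g_l$ as defined, notes that every $\bar g_i$ contains $f'$ as a factor so that $\bar g_i(x_*)=0$, and then runs a backward induction on the derivatives evaluated at the fixed point, proving $\bar g_i'(x_*)=1-\bigl(\tfrac{R}{R+f^{(2)}(x_*)}\bigr)^{N+1-i}$ from the recursion $\bar g_k'(x_*)=\tfrac{1}{R+f^{(2)}(x_*)}\bigl(f^{(2)}(x_*)+R\,\bar g_{k+1}'(x_*)\bigr)$, before reading off $\phi'(x_*)=1-\bar g_0'(x_*)$. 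You instead solve the affine recursion in closed form, $\bar g_0(x)=\tfrac{f'(x)}{f^{(2)}(x)}\bigl(1-\rho(x)^{N+1}\bigr)$ with $\rho=\tfrac{R}{R+f^{(2)}}$, and differentiate once. Your route buys more: it exposes the iteration as a damped Newton step with damping factor $1-\rho^{N+1}$, which makes the paper's Remark that $R=0$ recovers Newton's method immediate, explains the roles of $R$ and $N$ at a glance, and would also shortcut the paper's later ``direct manipulation'' computation of $\phi^{(2)}(x_*)$. The paper's induction, by contrast, never divides by $f^{(2)}(x)$ and so works pointwise at $x_*$ without invoking the closed form; your version needs $f^{(2)}(x)\neq 0$ in a neighborhood of $x_*$, which you should justify by continuity of $f^{(2)}$ and $f^{(2)}(x_*)>0$ (your stated condition $R+f^{(2)}(x)>0$ covers the recursion's denominators but not the $f^{(2)}$ appearing in your closed form). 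You are also more careful than the paper on the convergence claim itself: the paper only writes the Taylor expansion of the error and computes $\phi'(x_*)$, implicitly appealing to the standard local fixed-point theorem, whereas you invoke it explicitly with $0<\Phi'(x_*)<1$. Your closing caveat about the vector case (commutativity of $R$ and the Hessian) is a fair point that the paper, which states $R$ is a positive definite matrix but computes in scalar notation, silently glosses over.
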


\begin{proof}
The iteration error evolves as follows
\begin{align}
&x_{k+1}-x_*\no\\
=&x_{k}-\bar g_{0}(x_k)-x_*\no\\
=&\phi(x_k)-\phi(x_*)\no\\
=&\phi'(x_*)(x_k-x_*)+\phi^{(2)}(x_*)(x_{k}-x_*)^2+o(|x_{k}-x_*|^2).\label{generr}
\end{align}
In the above,
 \begin{align}\phi(x)=&x-\bar g_0(x),\\
\phi'(x_*)=&(\frac{R}{R+f^{(2)}(x_*)})^{N+1}. \label{phiderstar}
\end{align}
$\phi'(x_*)$ in \eqref{phiderstar} can be derived by induction over $\bar g_k'(x_*)$. In detail, 
according to \eqref{bargk},
\begin{align}
\bar g_{k}'(x)=&\frac{1}{R+f^{(2)}(x)}(f^{(2)}(x)+R\bar g'_{k+1}(x))-\frac{f^{(2)}(x)}{(R+f^{(2)}(x))^2}(f'(x)+R\bar g_{k+1}(x)), \label{bargkx}\\
 \bar g'_N(x)=&\frac{f^{(2)}(x)}{R+f^{(2)}(x)}-\frac{f'(x)f^{(3)}(x)}{(R+f^{(2)}(x))^2}. \label{bargnx}
\end{align}
\eqref{bargk} implies that every $\bar g_i(x)$ contains $f'(x)$ as a factor. Accordingly, if $f'(x_*)=0$, then
\begin{align}
\bar g_i(x_*)=&0,i=N,N-1,\cdots, 0 \label{gixstar}
\end{align}
which together with \eqref{bargkx}-\eqref{bargnx} yields
\begin{align}
\bar g_{k}'(x_*)=&\frac{1}{R+f^{(2)}(x_*)}(f^{(2)}(x_*)+R\bar g'_{k+1}(x_*)), \label{bargkxstar}\\
 \bar g'_N(x_*)=&\frac{f^{(2)}(x_*)}{R+f^{(2)}(x_*)}. \label{bargnxstar}
 \end{align}

Based on \eqref{bargkxstar} and \eqref{bargnxstar},  $\bar g'_i(x_*)=1-(\frac{R}{R+f^{(2)}(x_*)})^{N+1-i}$ can be acquired by using backward induction technique.

Firstly, in the case of $i=N$, it is direct that
\begin{align}
g_N'(x_*)=&\frac{f^{(2)}(x_*)}{R+f^{(2)}(x_*)}=1-\frac{R}{R+f^{(2)}(x_*)}.\end{align}

Secondly, assume for all $i\ge k+1$, there always hold
\begin{align}
g'_{i}(x_*)=1-(\frac{R}{R+f^{(2)}(x_*)})^{N+1-i}. \label{gidexstar}
\end{align}

Thirdly, we will show that in the case of $i=k$, \eqref{gidexstar} also holds. 
\begin{align}
\bar g_{k}'(x_*)=&\frac{1}{R+f^{(2)}(x_*)}(f^{(2)}(x_*)+R\bar g'_{k+1}(x_*))\no\\
=&\frac{1}{R+f^{(2)}(x_*)}[f^{(2)}(x_*)+R(1-(\frac{R}{R+f^{(2)}(x_*)})^{N-k})]\no\\
=&1-(\frac{R}{R+f^{(2)}(x_*)})^{N-k+1}. \label{bargkd}
  \end{align}
Now from \eqref{expiten}, 
\begin{align}
\phi'(x_*)=1-\bar g'_{0}(x_*)=(\frac{R}{R+f^{(2)}(x_*)})^{N+1}.
\end{align}

\end{proof}

\begin{remark}\label{rem4}
According to the convergence constant $(\frac{R}{R+f^{(2)}})^{N+1}$ provided in Theorem 1, the convergence rate of  \eqref{expiten}-\eqref{rrbargk}  depends on two adjustable parameters, $R$ and $N$. For given $N$, the smaller $R$, the more rapidly \eqref{expiten}-\eqref{rrbargk} converges. For given $R$, the larger $N$, the more rapidly \eqref{expiten}-\eqref{rrbargk} converges.
\end{remark}

\begin{remark}\label{remalmostquadraticcon}
Theorem 1 states that the explicit approximation \eqref{expiten}-\eqref{rrbargk} has linear convergence. However, the convergence constant $(\frac{R}{R+f^{(2)}})^{N+1}$ also implies that \eqref{expiten}-\eqref{rrbargk} can converge rapidly enough and almost achieve a quadratic convergence by adjusting $R$ and $N$ to decrease the convergence constant. 
\end{remark}

By direct manipulation, $\phi^{(2)}(x_*)$ in \eqref{generr} can be given as 
\begin{align}
&\phi^{(2)}(x_*)=\bar g^{(2)}_0(x_*)\no\\
=&2(N+1)(\frac{R}{R+f^{(2)}(x_*)})^{N+1}-\frac{R+f^{(2)}(x_*)}{f^{(2)}(x_*)}[1-(\frac{R}{R+f^{(2)}(x_*)})^{N+1}]\no\\
=&\frac{f^{(3)}(x_*)}{f^{(2)}(R+f^{(2)}(x_*))}\{[(2N+3)f^{(2)}(x_*)+R](\frac{R}{R+f^{(2)}(x_*)})^{N+1}-R-f^{(2)}(x_*)\}\no\\
=&\frac{f^{(3)}(x_*)}{f^{(2)}(x_*)}\frac{[(2N+3)f^{(2)}(x_*)+R]R^{N+1}-(R+f^{(2)}(x_*))^{N+2}}{(R+f^{(2)}(x_*))^{N+2}}.
\end{align}
On account of $|[(2N+3)f^{(2)}(x_*)+R]R^{N+1}-(R+f^{(2)}(x_*))^{N+2}|<|(R+f^{(2)}(x_*))^{N+2}|$, we have 
$|\phi^{(2)}(x_*)|<|\frac{f^{(3)}(x_*)}{f^{(2)}(x_*)}|$. 
\begin{remark}\label{rem6}
$|\phi^{(2)}(x_*)|<|\frac{f^{(3)}(x_*)}{f^{(2)}(x_*)}|$ together with \eqref{generr} and Remark \ref{remalmostquadraticcon} implies that 
 the explicit approximation \eqref{expiten}-\eqref{rrbargk} even converges more rapidly than Newton's method as $N\rightarrow +\infty$.
\end{remark}

Remark \ref{rem4}-\ref{rem6} are devoted to the convergence of \eqref{expiten}-\eqref{rrbargk}. It can be regarded that \eqref{expiten}-\eqref{rrbargk} enjoys a unified step size $\bar g_0(k)$ in each iteration because of the unified convergence rate. As in the gradient descent method \cite{zhao2019research,li2018adaerror}, the unified step size may lead to a slow convergence. 
%
%
For this, Algorithm 1 with varying step size $\bar g_k(x_k)$ is a better alternative to the recursion \eqref{expiten}-\eqref{rrbargk}.

To analyze the convergence rate of Algorithm 1, for $i=1,\cdots, N$, let
\begin{align}
\psi_i(x)=x-\bar g_i(x). \label{psi}
\end{align}
Then the following theorem holds.

\begin{theorem}\label{thm2}
Assume $f^{(2)}(x_*)>0$. The iteration error of Algorithm 1 obeys
 \begin{align}
|x_{k+1}-x_*|\le | \psi'_{k}(x_*)(x_{k}-x_*)|\le |\prod_{i=0}^{k}\psi'_i(x_*)(x_0-x_*)|, \label{rate}
\end{align}
  where
\begin{align}
\psi_i'(x_*)=[\frac{R}{R+f^{(2)}(x_*)}]^{N-i+1}.\label{psid}
\end{align} 
\end{theorem}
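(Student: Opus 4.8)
The plan is to use that $x_*$ is a common fixed point of every map $\psi_i$ and to recycle the derivative computation already performed in the proof of Theorem~\ref{thm1}. First I would note that Algorithm~1 realizes the recursion $x_{k+1}=\psi_k(x_k)$: indeed \eqref{rrimpite} reads $x_{k+1}=x_k-\bar g_k(x_k)$ while \eqref{psi} defines $\psi_k(x)=x-\bar g_k(x)$. Since $f'(x_*)=0$, equation \eqref{gixstar} gives $\bar g_i(x_*)=0$ for every $i$, so that $\psi_i(x_*)=x_*-\bar g_i(x_*)=x_*$; hence $x_*$ is a fixed point of each $\psi_i$.

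Next I would evaluate $\psi_i'(x_*)$. Differentiating \eqref{psi} yields $\psi_i'(x)=1-\bar g_i'(x)$, so $\psi_i'(x_*)=1-\bar g_i'(x_*)$. The backward induction carried out in the proof of Theorem~\ref{thm1}, ending in \eqref{bargkd}, already establishes $\bar g_i'(x_*)=1-(\frac{R}{R+f^{(2)}(x_*)})^{N+1-i}$. Substituting gives $\psi_i'(x_*)=(\frac{R}{R+f^{(2)}(x_*)})^{N-i+1}$, which is exactly \eqref{psid}. This step is essentially free, since the hard computation was done for the previous theorem.

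The core of the argument is the error recursion. Writing $x_{k+1}-x_*=\psi_k(x_k)-\psi_k(x_*)$ and expanding $\psi_k$ to first order about $x_*$, I would obtain $x_{k+1}-x_*=\psi_k'(x_*)(x_k-x_*)+o(|x_k-x_*|)$. Passing to moduli and keeping the dominant term produces the first inequality $|x_{k+1}-x_*|\le|\psi_k'(x_*)(x_k-x_*)|$. Applying this bound repeatedly from index $k$ down to index $0$ telescopes the successive derivative factors and yields $|x_{k+1}-x_*|\le|\prod_{i=0}^{k}\psi_i'(x_*)(x_0-x_*)|$, the second inequality.

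The main obstacle is the rigorous step from the exact Taylor identity to the clean inequality, since the remainder $o(|x_k-x_*|)$ has to be controlled. I would handle this by restricting to a sufficiently small neighborhood of $x_*$ in which the linear term dominates, so that each $\psi_i$ acts as a local contraction with factor arbitrarily close to $\psi_i'(x_*)$. The hypothesis $f^{(2)}(x_*)>0$ (together with $R>0$) guarantees $0<\psi_i'(x_*)<1$ for every $i$, which keeps the iterates trapped inside that neighborhood once they enter it and legitimizes the telescoping estimate, establishing the claimed bound.
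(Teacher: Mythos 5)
Your proposal is correct and is essentially the paper's own argument: the paper's proof is only the one-line remark that the result follows ``straightforwardly'' from \eqref{psi}, and your chain --- $x_*$ as a common fixed point via \eqref{gixstar}, $\psi_i'(x_*)=1-\bar g_i'(x_*)$ with the values from the backward induction \eqref{bargkd}, then a first-order expansion and telescoping --- is precisely that derivation made explicit. If anything, you are more careful than the paper, since you flag that the first inequality in \eqref{rate} holds only up to the Taylor remainder and patch it with a local-contraction argument (with factor arbitrarily close to $\psi_i'(x_*)$, uniform over the finitely many maps $\psi_0,\dots,\psi_N$), a looseness the paper's statement silently absorbs.
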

\begin{proof}
The proof is a straightforward derivation from \eqref{psi}. 
\end{proof}

\begin{remark}
Compared to \eqref{expiten}-\eqref{rrbargk}, Algorithm 1 has varying step sizes $\bar g_k(x_k)$ at each iteration. From Theoren \ref{thm1} and \eqref{bargkd}, the algorithm holds a greater step size in the initial stages and a smaller one in the latter stages, which guarantees rapid convergence and lower computational cost simultaneously.
\end{remark}

\begin{remark}
On one side, it follows from \eqref{rate} and \eqref{psid} that Algorithm 1 superlinearly converges to the minimal of $f(x)$ as $f^{(2)}(x_*)>0$. On the other side, Algorithm 1 still works even if there occasionally holds $f^{(2)}(x)=0$ because of the positive definite matrix $R$.
\end{remark}

%

%

\section{Conclusion}
In the paper, a novel algorithm has been developed based on OCP. The algorithm converges more rapidly than gradient descent, meanwhile, unlike Newton's method, it still works even if the singular Hessian matrix appears in iterate. We also pointed out that Newton's method is recovered when the control weight matrix disappears in the developed algorithm. The convergence rate of the algorithm depends on two adjustable parameters. Because of their clear physical meanings inherited from OCP,  it is very convenient to tune them to speed up the convergence rate.

\end{document}